\newcommand{\re}{\mathop{\mathrm{Re}}}
\newcommand{\im}{\mathop{\mathrm{Im}}}
\newcommand{\artanh}{\mathop{\mathrm{artanh}}}
\newtheorem{theorem}{Theorem}
\newtheorem{lemma}{Lemma}
\newtheorem{example}{Example}
\newtheorem{proposition}{Proposition}
\newtheorem{remark}{Remark}
\title[]{A Strengthening of the Harnack Inequality}
\author{Marek Svetlik}
\address{Faculty of mathematics, University of Belgrade, Studentski Trg 16,
Belgrade, Republic of Serbia} \email{svetlik@matf.bg.ac.rs}
\date{January 16, 2025}
\keywords{The Harnack inequality; positive harmonic functions; the Schwarz-Pick lemma; hyperbolic density}
\subjclass[2020]{Primary 31A05; Secondary 30C80; 30F45}
\begin{document}

\begin{abstract}
We prove the stronger version of Harnack's inequality for positive harmonic functions defined on the unit disc.
\end{abstract}

\maketitle

\section{Introduction and the main result}

Let $\mathbb{U}=\{z\in\mathbb{C}:|z|<1\}$ be the unit disc. It is known (see e.g. {\cite[p. 47]{axler}} or {\cite[p. 13]{duren}}) that the harmonic function $u:\mathbb{U}\rightarrow (0,+\infty) $ satisfies Harnack's inequality
\begin{equation}\label{HarnackIneqTheoremIneq}
 \frac{1-|z|}{1+|z|}\leqslant \frac{u(z)}{u(0)}\leqslant \frac{1+|z|}{1-|z|}
\end{equation}
for all $z\in\mathbb{U}$.

In this paper we prove the following stronger version of this classical result.

\begin{theorem}\label{MainTheorem}
Let $u:\mathbb{U}\rightarrow (0,+\infty) $ be a harmonic function. Then
\begin{equation}\label{HarnackIneqTheoremSvIneq}
 \left(\frac{1+|z|^2}{1-|z|^2}+\frac{|\nabla u(0)|}{u(0)}\frac{|z|}{1-|z|^2}\right)^{-1}\leqslant \frac{u(z)}{u(0)} \leqslant\left(\frac{1+|z|^2}{1-|z|^2}+\frac{|\nabla u(0)|}{u(0)}\frac{|z|}{1-|z|^2}\right),
\end{equation}
for all $z\in\mathbb{U}$. Here $\nabla u=(u_{x},u_{y})$ is the gradient of the function $u$.
\end{theorem}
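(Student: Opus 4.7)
The plan is to reduce Theorem~\ref{MainTheorem} to a Schwarz-lemma problem on $\mathbb{U}$ by passing to a holomorphic realisation of $u$. Since $\mathbb{U}$ is simply connected, $u$ admits a harmonic conjugate $v$ with $v(0)=0$, so $f:=u+iv$ is holomorphic on $\mathbb{U}$ with $\re f = u > 0$, meaning $f$ maps $\mathbb{U}$ into the right half-plane. Writing $M:=u(0)>0$, the Cauchy--Riemann equations give $|f'(0)|=|\nabla u(0)|$. The M\"obius transformation $w\mapsto(w-M)/(w+M)$ sends the right half-plane onto $\mathbb{U}$ with $M\mapsto 0$, so $g(z):=(f(z)-M)/(f(z)+M)$ is a holomorphic self-map of $\mathbb{U}$ with $g(0)=0$, and a direct computation gives $|g'(0)|=|f'(0)|/(2M)=|\nabla u(0)|/(2u(0))=:a$. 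Inverting this relation and taking real parts produces the identity
\begin{equation*}
\frac{u(z)}{u(0)} \;=\; \frac{1-|g(z)|^{2}}{|1-g(z)|^{2}},
\end{equation*}
which converts the theorem into a statement about the modulus of a Schwarz map with known first derivative at the origin.

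The crucial step is then to bound $|g(z)|$ using both $g(0)=0$ and $|g'(0)|=a$. The trick I would use is to pass to the holomorphic quotient $h(z):=g(z)/z$, which extends through the origin and maps $\mathbb{U}$ into the closed unit disc with $h(0)=g'(0)$. Schwarz--Pick applied to $h$ places $h(z)$ in the pseudohyperbolic disc about $g'(0)$ of radius $|z|$, and maximising $|h|$ across that disc yields the refined Schwarz inequality
\begin{equation*}
|g(z)| \;\le\; \rho \;:=\; \frac{|z|\,(|z|+a)}{1+a|z|}.
\end{equation*}
I expect this Dieudonn\'e-type step to be the main obstacle: Schwarz--Pick applied to $g$ itself delivers only $|g(z)|\le|z|$, which recovers classical Harnack but is not strong enough; one must work with $g/z$ in order to turn the vanishing of $g$ at the origin into a quantitative gain.

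The rest is routine. For every $w\in\mathbb{U}$ one has the elementary bound $(1-|w|)/(1+|w|)\le (1-|w|^{2})/|1-w|^{2}\le (1+|w|)/(1-|w|)$, whose outer members are monotone in $|w|$, so substituting $|g(z)|\le\rho$ gives
\begin{equation*}
\frac{1-\rho}{1+\rho} \;\le\; \frac{u(z)}{u(0)} \;\le\; \frac{1+\rho}{1-\rho}.
\end{equation*}
Writing $r=|z|$ and simplifying, one checks $(1+\rho)/(1-\rho)=(1+r^{2}+2ar)/(1-r^{2})$, and recalling $2a=|\nabla u(0)|/u(0)$ recovers exactly the right-hand side of \eqref{HarnackIneqTheoremSvIneq}; the lower bound is its reciprocal. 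Sharpness can be verified afterwards on the Blaschke product $g(z)=z(z+a)/(1+az)$, which realises equality at every stage and produces the extremal positive harmonic function.
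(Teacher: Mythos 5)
Your proposal is correct, but it follows a genuinely different route from the paper. The paper works in the hyperbolic metric throughout: it realises $u=\re f$ with $f:\mathbb{U}\to\mathbb{K}$ holomorphic, invokes the Beardon--Carne strengthening of the Schwarz--Pick lemma to bound $d_{\mathbb{K}}(f(z),f(0))$ by $\log\bigl(\cosh d_{\mathbb{U}}(z,0)+c\sinh d_{\mathbb{U}}(z,0)\bigr)$ with $c=|f^{h}(0)|=|\nabla u(0)|/(2u(0))$, and then translates the resulting hyperbolic-disc containment in $\mathbb{K}$ into a two-sided bound on $\re f$ via an explicit computation of the Euclidean shape of $\overline{D}_{\mathbb{K}}(u(0),\cdot)$. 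You instead transplant everything to the disc by composing with $w\mapsto(w-M)/(w+M)$, so the whole problem becomes a refined Schwarz-lemma estimate for the self-map $g$ with $g(0)=0$ and $|g'(0)|=a$; your Dieudonn\'e-type step, bounding $|g(z)|$ by $|z|(|z|+a)/(1+a|z|)$ via Schwarz--Pick applied to $h=g/z$, is precisely the content (and essentially the original proof) of the Beardon--Carne inequality in the special case needed here, so you have re-derived the key input rather than cited it. Your replacement of the paper's Lemma 1 by the elementary estimate $(1-|w|)/(1+|w|)\leqslant(1-|w|^{2})/|1-w|^{2}\leqslant(1+|w|)/(1-|w|)$ together with monotonicity in $|w|$ is a clean equivalent of the paper's computation of the real-part range over a hyperbolic disc in $\mathbb{K}$. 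The net effect is that your argument is self-contained and avoids all hyperbolic-metric language, at the cost of hiding the geometric interpretation the paper emphasises; all the computational details (the value of $|g'(0)|$, the identity $u/u(0)=(1-|g|^{2})/|1-g|^{2}$, the simplification of $(1+\rho)/(1-\rho)$, and the extremal Blaschke product, which matches the paper's Example) check out. One cosmetic remark: handle separately the degenerate case where $h$ is a unimodular constant (i.e.\ $a=1$, $g(z)=\lambda z$), since Schwarz--Pick is stated for maps into the open disc; the conclusion holds there trivially.
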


Since for harmonic function $u:\mathbb{U}\rightarrow (0,+\infty) $ we have $|\nabla u(0)|\leqslant 2u(0)$ (see inequality \eqref{Thm:SchwarzPickForPositiveHarmonic:fla1} below), it follows that
\begin{equation}\label{glava3:HarnackIneqSvRemarkI1}
 \frac{1+|z|^2}{1-|z|^2}+\frac{|\nabla u(0)|}{u(0)}\frac{|z|}{1-|z|^2}\leqslant\frac{1+|z|}{1-|z|}
\end{equation}
and
\begin{equation}\label{glava3:HarnackIneqSvRemarkI2}
 \left(\frac{1+|z|^2}{1-|z|^2}+\frac{|\nabla u(0)|}{u(0)}\frac{|z|}{1-|z|^2}\right)^{-1}\geqslant\frac{1-|z|}{1+|z|}
\end{equation}
for all $z\in\mathbb{U}$. Therefore, the inequality \eqref{HarnackIneqTheoremSvIneq} is stronger than the inequality \eqref{HarnackIneqTheoremIneq}.

Fix $c\in \mathbb{U}$ and denote by $\varphi_{c}$ the mapping given by
\begin{equation*}
 \varphi_{c}(z)=\frac{z+c}{1+\overline{c}z}.
\end{equation*}
It is known that $\varphi_{c}$ is a conformal isomorphism of the unit disc $\mathbb{U}$ onto itself. In particular, if $c=1$, then we define $\varphi_c(z)=1$ for all $z\in \mathbb{U}$.

The following example shows that there is function $u:\mathbb{U}\rightarrow(0,+\infty)$ and $z\in \mathbb{U}$, so that the equality in \eqref{HarnackIneqTheoremIneq} holds.

\begin{example}
Let $0\leqslant c\leqslant1$ and let $u_1,u_2:\mathbb{U}\rightarrow(0,+\infty)$ be defined by
\begin{equation*}
    u_1(z)=\re\frac{1+z\varphi_{c}(z)}{1-z\varphi_{c}(z)}.
\end{equation*}
and
\begin{equation*}
    u_2(z)=\re\frac{1-z\varphi_{c}(z)}{1+z\varphi_{c}(z)}.
\end{equation*}

Then $u_1(0)=u_2(0)=1$ and $|\nabla u_1(0)|=|\nabla u_2(0)|=2c$. Further,
\begin{equation*}
    \frac{u_1(x)}{u_1(0)}=\frac{1+x^2}{1-x^2}+\frac{|\nabla u_1(0)|}{u_1(0)}\frac{x}{1-x^2}
\end{equation*}
and
\begin{equation*}
    \frac{u_2(x)}{u_2(0)}=\left(\frac{1+x^2}{1-x^2}+\frac{|\nabla u_2(0)|}{u_2(0)}\frac{x}{1-x^2}\right)^{-1},
\end{equation*}
for all $x\in[0,1)$.
\end{example}

To prove the Theorem \ref{MainTheorem}, we use the hyperbolic densities on the unit disc $\mathbb{U}$ and on the right half-plane $\mathbb{K}=\{z\in\mathbb{C}:\re{z}>0\}$.

The hyperbolic density on the unit disc $\mathbb{U}$ is defined by
\begin{equation*}
 \rho_{\mathbb{U}}(z)=\frac{2}{1-|z|^2}, \quad z\in \mathbb{U}.
\end{equation*}
Based on Riemann's Mapping Theorem we can define the hyperbolic density on an arbitrary simply connected domain $\Omega\subset \mathbb{C}$, $\Omega$ different from $\mathbb{C}$ (we call these domains hyperbolic), in the following way
\begin{equation*}
 \rho_{\Omega}(z)=\rho_{\mathbb{U}}(\phi(z))|\phi'(z)|, \quad z\in\Omega,
\end{equation*}
where $\phi:\Omega\rightarrow\mathbb{U}$ is a conformal isomorphism from $\Omega$ onto $\mathbb{U}$. In particular, if $\psi$ is defined by $\displaystyle \psi(z)=\frac{z-1}{z+1}$, then $\psi$ is a conformal isomorphism from $\mathbb{K}$ onto $\mathbb{U}$ and by direct calculation we obtain
\begin{equation*}
 \rho_{\mathbb{K}}(z)=\frac{1}{\re{z}}, \quad z\in\mathbb{K}.
\end{equation*}

We define the hyperbolic distance on a hyperbolic domain $\Omega$ as follows
\begin{equation}
 d_{\Omega}(z_1,z_2)=\inf\int_{\gamma}\rho_{\Omega}(z)|dz|,
\end{equation}
where the infimum is taken over all $C^1$ curves $\gamma$ that connect $z_1$ with $z_2$ in $\Omega$. Specifically, it can be shown that
\begin{equation*}
 d_{\mathbb{U}}(z,0)=\log{\frac{1+|z|}{1-|z|}}=2\artanh{|z|}.
\end{equation*}

Let $f:\Omega_1\rightarrow\Omega_2$ ($\Omega_1$ and $\Omega_2$ are hyperbolic domains in $\mathbb{C}$) be a holomorphic function. The famous Schwarz-Pick lemma asserts that
\begin{equation}\label{SchwarzPickInequality1}
 \rho_{\Omega_2}(f(z))|f'(z)|\leqslant\rho_{\Omega_1}(z), \quad z\in\Omega_1.
\end{equation}
As a corollary of \eqref{SchwarzPickInequality1} we have
\begin{equation}\label{SchwarzPickInequality2}
 d_{\Omega_2}(f(z_1),f(z_2))\leqslant d_{\Omega_1}(z_1,z_2), \quad \mbox{for all}\quad z_1,z_2\in\Omega_1.
\end{equation}
Moreover, if $f$ is a conformal isomorphism from $\Omega_1$ onto $\Omega_2$, then equalities hold in \eqref{SchwarzPickInequality1} and \eqref{SchwarzPickInequality2}. Thus, if $f$ is a conformal isomorphism from $\Omega_1$ onto $\Omega_2$, then $f$ is an isometry from the metric space $(\Omega_1,d_{\Omega_1})$ onto the metric space $(\Omega_2,d_{\Omega_2})$.

The hyperbolic derivative of $f$ at $z\in\Omega_1$ (for motivation and details see section 5 in \cite{BeardonMinda}, cf. \cite{BeardonCarne}) is defined as follows
\begin{equation*}
 f^h(z)=\frac{\rho_{\Omega_2}(f(z))}{\rho_{\Omega_1}(z)}f'(z).
\end{equation*}

From \eqref{SchwarzPickInequality1} it follows immediately that $|f^h(z)|\leqslant 1$ for all $z\in \Omega_1$.

Using this notion, A. F. Beardon and T. K. Carne proved the following theorem, which is a strengthening of the Schwarz-Pick inequality \eqref{SchwarzPickInequality2}.

\begin{theorem}[\cite{BeardonCarne}]\label{Thm:BeardonCarne}
Let $f:\Omega_1\rightarrow\Omega_2$ ($\Omega_1$ and $\Omega_2$ are hyperbolic domains in $\mathbb{C}$) be a holomorphic function. Then
\begin{equation}\label{BeardonCarne:Inequality}
 d_{\Omega_2}(f(z_1),f(z_2))\leqslant\log\Big(\cosh {d_{\Omega_1}(z_1,z_2)}+|f^h(z)|\sinh{d_{\Omega_1}(z_1,z_2)}\Big).
\end{equation}
\end{theorem}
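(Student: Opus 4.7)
The plan is to reduce to the case $\Omega_1=\Omega_2=\mathbb{U}$ with $z_1=0$ and $f(0)=0$, apply a classical refinement of Schwarz's lemma, and then re-express the resulting Euclidean bound in the hyperbolic metric. Concretely, let $\phi_1:\mathbb{U}\to\Omega_1$ and $\phi_2:\Omega_2\to\mathbb{U}$ be conformal isomorphisms chosen so that $\phi_1(0)=z_1$ and $\phi_2(f(z_1))=0$, and set $g=\phi_2\circ f\circ\phi_1:\mathbb{U}\to\mathbb{U}$. Because $\phi_1,\phi_2$ are isometries of the hyperbolic metric, one has $d_{\mathbb{U}}(0,g(w))=d_{\Omega_2}(f(z_1),f(z_2))$ for $w=\phi_1^{-1}(z_2)$, with $d_{\mathbb{U}}(0,w)=d_{\Omega_1}(z_1,z_2)$. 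The hyperbolic derivative behaves multiplicatively under composition with conformal isomorphisms, so $|g^h(0)|=|f^h(z_1)|$, which is the quantity I will identify with the $|f^h(z)|$ appearing in \eqref{BeardonCarne:Inequality}.

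Once reduced to $g:\mathbb{U}\to\mathbb{U}$ with $g(0)=0$, write $a=|g'(0)|=|g^h(0)|$ and $r=|w|$. The idea is to apply Schwarz's lemma to $h(w)=g(w)/w$, which is holomorphic on $\mathbb{U}$ and, by the usual Schwarz estimate, satisfies $|h|\leqslant 1$ with $h(0)=g'(0)$. Applying the Schwarz--Pick estimate \eqref{SchwarzPickInequality1} (equivalently, the invariant Schwarz lemma) to $h$ on $\mathbb{U}$ gives
\begin{equation*}
 \left|\frac{h(w)-g'(0)}{1-\overline{g'(0)}\,h(w)}\right|\leqslant |w|,
\end{equation*}
from which a direct Möbius calculation, maximizing the modulus of $(\zeta+g'(0))/(1+\overline{g'(0)}\zeta)$ over $|\zeta|\leqslant r$, yields
\begin{equation*}
 |g(w)| \;\leqslant\; r\cdot\frac{a+r}{1+ar}.
\end{equation*}

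Finally, I convert this Euclidean estimate into a hyperbolic one. Using $t=d_{\mathbb{U}}(0,w)=\log\tfrac{1+r}{1-r}$, hence $r=\tanh(t/2)$, the identities $\cosh t=(1+r^2)/(1-r^2)$ and $\sinh t=2r/(1-r^2)$ give
\begin{equation*}
 \log\frac{1+|g(w)|}{1-|g(w)|}\;\leqslant\;\log\frac{1+2ar+r^2}{1-r^2}\;=\;\log(\cosh t+a\sinh t),
\end{equation*}
which is exactly the desired bound once rewritten in terms of $d_{\Omega_1}$, $d_{\Omega_2}$, and $|f^h(z_1)|$. The only place where care is needed is the Möbius optimization step producing the sharp constant $(a+r)/(1+ar)$; the rest is algebraic manipulation and symmetric use of the conformal invariance of $\rho$ and the hyperbolic derivative. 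The main conceptual obstacle is clean bookkeeping of the reduction: verifying that $|f^h|$ really is preserved under pre- and post-composition with conformal isomorphisms, so that the inequality proved at the origin for $g$ translates faithfully to the stated inequality for $f$ at the base-point $z_1$.
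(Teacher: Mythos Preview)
Your argument is correct. The paper does not supply its own proof of this theorem: it quotes the result from \cite{BeardonCarne} (stated there for $\Omega_1=\Omega_2=\mathbb{U}$) and simply remarks that the general-domain version ``follows directly'' by conformal invariance. Your write-up does precisely that reduction---pre- and post-composing with Riemann maps and using that the hyperbolic metric and the hyperbolic derivative are conformally invariant---and then carries out the standard Beardon--Carne disc argument: apply Schwarz to $g(w)/w$, then Schwarz--Pick to the quotient, extract $|g(w)|\leqslant r\,\dfrac{a+r}{1+ar}$, and translate via $\cosh t=\dfrac{1+r^{2}}{1-r^{2}}$, $\sinh t=\dfrac{2r}{1-r^{2}}$. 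This is exactly the route of \cite{BeardonCarne}, so your proof is not a different approach but rather a faithful expansion of what the paper leaves as a citation.

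Two small points worth tightening. First, to apply Schwarz--Pick to $h(w)=g(w)/w$ you need $h(\mathbb{U})\subset\mathbb{U}$; if $|h|$ attains the value $1$ then $g$ is a rotation, $a=1$, and \eqref{BeardonCarne:Inequality} reduces to the trivial equality $d_{\mathbb{U}}(g(w),0)=d_{\mathbb{U}}(w,0)$, so this boundary case is harmless but should be mentioned. Second, the statement as printed has an unbound variable $z$ in $|f^{h}(z)|$; you correctly interpret it as $|f^{h}(z_1)|$ (the base point), and your reduction shows this is the intended reading.
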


Note that in the paper \cite{BeardonCarne} the previous theorem was formulated and proved for the case $\Omega_1=\Omega_2=\mathbb{U}$. The version of this theorem that we have stated follows directly from the theorem given in \cite{BeardonCarne}. The inequality \eqref{BeardonCarne:Inequality} plays a crucial role in the proof of the Theorem \ref{MainTheorem}. In addition, the following elementary facts play an important role in this proof (for this approach see \cite{MMSchw_Kob}).

\begin{itemize}
\item[1)] If $u:\mathbb{U}\rightarrow(0,+\infty)$ is a harmonic function, then it is known that there exists a holomorphic function $f:\mathbb{U}\rightarrow \mathbb{K}$ such that $\re{f}=u$ and $f(0)=u(0)$. 

\item[2)] Using the Schwarz-Pick inequality, we obtain
\begin{equation}\label{HalfPlane:1}
 \rho_{\mathbb{K}}(f(z))|f'(z)|\leqslant\rho_{\mathbb{U}}(z), \quad z\in \mathbb{U}.
\end{equation}

\item[3)] Due to the Cauchy-Riemann equations, we have $f'=u_x-iu_y$, i.e. $f'=(u_x,-u_y)=\overline{\nabla u}$. From this follows
\begin{equation}\label{HalfPlane2}
 |f'(z)|=|\overline{\nabla u}(z)|=|\nabla u(z)|, \quad z\in \mathbb{U}.
\end{equation}

\item[4)] Since $\displaystyle \rho_{\mathbb{K}}(z)=\frac{1}{\re{z}}$, $z\in \mathbb{K}$ we get
\begin{equation}\label{HalfPlane3}
 \rho_{\mathbb{K}}(f(z))=\rho_{\mathbb{K}}(u(z)), \quad z\in \mathbb{U}.
\end{equation}

\item[5)] From \eqref{HalfPlane:1}, \eqref{HalfPlane2} and \eqref{HalfPlane3} we immediately obtain the following proposition.
\end{itemize}

\begin{proposition}\label{Thm:SchwarzPickForPositiveHarmonic}
Let $u:\mathbb{U}\rightarrow(0,+\infty)$ be a harmonic function. Then
 \begin{equation*}
 \rho_{\mathbb{K}}(u(z))|\nabla u(z)|\leqslant\rho_{\mathbb{U}}(z),
 \end{equation*}
 i.e.
 \begin{equation}\label{Thm:SchwarzPickForPositiveHarmonic:fla1}
 |\nabla u(z)|\leqslant\frac{2u(z)}{1-|z|^2},
 \end{equation}
for all $z\in \mathbb{U}$. Moreover, if $u$ is the real part of a conformal isomorphism from $\mathbb{U}$ onto $\mathbb{K}$, then in \eqref{Thm:SchwarzPickForPositiveHarmonic:fla1} equality holds for all $z\in \mathbb{U}$. On the other hand, if in \eqref{Thm:SchwarzPickForPositiveHarmonic:fla1} equality holds for some $z\in \mathbb{U}$, then $u$ is the real part of a conformal isomorphism $\mathbb{U}$ onto $\mathbb{K}$.
\end{proposition}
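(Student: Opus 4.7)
The plan is to combine the four ingredients 1)--4) listed just before the proposition. First I would use 1) to lift $u$ to a holomorphic map $f:\mathbb{U}\to\mathbb{K}$ with $\re f=u$ and $f(0)=u(0)$. Then I would apply the Schwarz--Pick inequality \eqref{HalfPlane:1} to this $f$, substitute $|f'(z)|=|\nabla u(z)|$ via \eqref{HalfPlane2} and $\rho_{\mathbb{K}}(f(z))=\rho_{\mathbb{K}}(u(z))=1/u(z)$ via \eqref{HalfPlane3}, and finally plug in $\rho_{\mathbb{U}}(z)=2/(1-|z|^2)$. This yields \eqref{Thm:SchwarzPickForPositiveHarmonic:fla1} as a one-line rearrangement.

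For the two equality statements, the same chain of substitutions shows that equality in \eqref{Thm:SchwarzPickForPositiveHarmonic:fla1} at a point $z$ is equivalent to equality in the Schwarz--Pick inequality \eqref{HalfPlane:1} for $f$ at $z$. In the first direction, if $u=\re\phi$ for some conformal isomorphism $\phi:\mathbb{U}\to\mathbb{K}$, then the $f$ supplied by 1) differs from $\phi$ by an imaginary constant, hence $f$ is itself a conformal isomorphism, so Schwarz--Pick is an equality at every $z$, and therefore so is \eqref{Thm:SchwarzPickForPositiveHarmonic:fla1}. Conversely, I would invoke the rigidity case of Schwarz--Pick: equality at a single point forces $f$ to be a conformal isomorphism of $\mathbb{U}$ onto its image in $\mathbb{K}$, whence $u=\re f$ is exhibited as the real part of such an isomorphism.

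The only genuine obstacle is the rigidity half of the converse, since the standard Schwarz--Pick rigidity is usually stated for disc-to-disc maps. A clean way around this is to post-compose with the Cayley map $\psi:\mathbb{K}\to\mathbb{U}$ from the introduction: then $\psi\circ f:\mathbb{U}\to\mathbb{U}$ also attains equality at the same point in its Schwarz--Pick inequality, so by the classical disc-to-disc rigidity it is a Möbius automorphism of $\mathbb{U}$, and consequently $f=\psi^{-1}\circ(\psi\circ f)$ is a conformal isomorphism of $\mathbb{U}$ onto $\mathbb{K}$. Everything else in the proof is a direct substitution.
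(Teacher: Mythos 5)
Your proof is correct and follows the same route as the paper, which simply combines facts 1)--4) (the lift to $f$, the Schwarz--Pick inequality \eqref{HalfPlane:1}, and the identities \eqref{HalfPlane2} and \eqref{HalfPlane3}) and declares the proposition immediate. Your handling of the two equality statements via the Cayley transform and classical disc-to-disc Schwarz--Pick rigidity is sound and in fact supplies details that the paper leaves unstated.
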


We call the facts 1)-5) together with the Proposition \ref{Thm:SchwarzPickForPositiveHarmonic} the \emph{half-plane method}. This method can be seen as an analogous version of the strip method used in the paper \cite{MMandMS} (see also \cite{MMSchw_Kob}).

Note that the following theorem can be derived as a corollary of \eqref{Thm:SchwarzPickForPositiveHarmonic:fla1}.

\begin{theorem}[{\cite[Theorem 1.1]{MMar}}]\label{Thm:Markovic}
Let $u:\mathbb{U}\rightarrow(0,+\infty)$ be a harmonic function. Then
\begin{equation}\label{Thm:MarkovicIneq}
 d_{\mathbb{K}}(u(z_1),u(z_2))\leqslant d_{\mathbb{U}}(z_1,z_2),
\end{equation}
for all $z_1,z_2\in \mathbb{U}$.
\end{theorem}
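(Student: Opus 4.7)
My plan is to deduce Theorem \ref{Thm:Markovic} directly from the pointwise density estimate \eqref{Thm:SchwarzPickForPositiveHarmonic:fla1} (equivalently, from Proposition \ref{Thm:SchwarzPickForPositiveHarmonic}) by the standard length‑integral argument: push an arbitrary $C^{1}$ curve in $\mathbb{U}$ forward by $u$ to a $C^{1}$ curve in $\mathbb{K}$, bound its hyperbolic length in $\mathbb{K}$ by the hyperbolic length of the original curve in $\mathbb{U}$, and then pass to the infimum.

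In detail, I would fix $z_{1},z_{2}\in \mathbb{U}$ and let $\gamma:[0,1]\to\mathbb{U}$ be an arbitrary $C^{1}$ curve with $\gamma(0)=z_{1}$ and $\gamma(1)=z_{2}$. Since $u$ takes values in $(0,+\infty)\subset\mathbb{K}$, the composition $u\circ\gamma$ is a $C^{1}$ curve in $\mathbb{K}$ joining $u(z_{1})$ to $u(z_{2})$, so by definition of the hyperbolic distance,
\begin{equation*}
 d_{\mathbb{K}}(u(z_{1}),u(z_{2}))\leqslant \int_{0}^{1}\rho_{\mathbb{K}}(u(\gamma(t)))\,|(u\circ\gamma)'(t)|\,dt.
\end{equation*}
The chain rule, viewing $\gamma'(t)$ as a vector in $\mathbb{R}^{2}$, gives $(u\circ\gamma)'(t)=\nabla u(\gamma(t))\cdot\gamma'(t)$, and Cauchy--Schwarz yields $|(u\circ\gamma)'(t)|\leqslant |\nabla u(\gamma(t))|\,|\gamma'(t)|$.

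Combining this with Proposition \ref{Thm:SchwarzPickForPositiveHarmonic}, which supplies the crucial pointwise bound $\rho_{\mathbb{K}}(u(z))|\nabla u(z)|\leqslant \rho_{\mathbb{U}}(z)$, the integrand is dominated by $\rho_{\mathbb{U}}(\gamma(t))\,|\gamma'(t)|$. Therefore
\begin{equation*}
 d_{\mathbb{K}}(u(z_{1}),u(z_{2}))\leqslant \int_{0}^{1}\rho_{\mathbb{U}}(\gamma(t))\,|\gamma'(t)|\,dt,
\end{equation*}
and taking the infimum over all admissible curves $\gamma$ gives $d_{\mathbb{K}}(u(z_{1}),u(z_{2}))\leqslant d_{\mathbb{U}}(z_{1},z_{2})$, as required.

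There is no real obstacle here; the only point to watch is the conceptual one that the image curve $u\circ\gamma$ lies on the positive real axis in $\mathbb{K}$, so its hyperbolic length in $\mathbb{K}$ genuinely provides an upper bound for $d_{\mathbb{K}}(u(z_{1}),u(z_{2}))$ (the infimum in the definition of $d_{\mathbb{K}}$ being over all $C^{1}$ curves in $\mathbb{K}$, not only over those in $(0,+\infty)$). The real content has been absorbed into Proposition \ref{Thm:SchwarzPickForPositiveHarmonic}, after which Theorem \ref{Thm:Markovic} is a one‑line consequence of the density-to-distance integration principle.
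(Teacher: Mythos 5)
Your argument is correct and is exactly the derivation the paper has in mind: the paper states that Theorem \ref{Thm:Markovic} ``can be derived as a corollary of \eqref{Thm:SchwarzPickForPositiveHarmonic:fla1}'' without spelling out the details, and your length-integration of the pointwise bound $\rho_{\mathbb{K}}(u(z))|\nabla u(z)|\leqslant\rho_{\mathbb{U}}(z)$ along an arbitrary $C^1$ curve, followed by taking the infimum, is precisely that omitted routine step. No gaps; the chain-rule/Cauchy--Schwarz handling and the remark that the image curve need only lie in $\mathbb{K}$ (not be a geodesic) are both correctly attended to.
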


\begin{remark}
In \cite{MMar}, M. Markovi\'c proved the Theorem \ref{Thm:Markovic} for the case where the domain of the function $u$ is $\mathbb{H}=\{z\in \mathbb{C}: \im{z}>0\}$, but this is not essential since $\mathbb{H}$ and $\mathbb{U}$ are conformally equivalent.
\end{remark}

\begin{remark}
Considering the proof of the Theorem \ref{Thm:Markovic} from the paper \cite{PMel}, it is straightforward to prove that \eqref{Thm:MarkovicIneq} is equivalent to Harnack's inequality.
\end{remark}

For further results of this type of inequalities, we refer the reader to \cite{MSphd} and the literature cited there.

\section{Proof of the main result}

First we recall another notion and prove two lemmas.

For a hyperbolic domain $\Omega$, $a\in\Omega$ and $r>0$ we denote by $\overline{D}_{\Omega}(a,r)$ a hyperbolic closed disc with centre $a$ and radius $r$, i.e. $\overline{D}_{\Omega}(a,r)=\{z\in\Omega: d_{\Omega}(z,a)\leqslant r\}$. It is simply checked that
\begin{equation}\label{HyperbolicDiscIsEuclideanDisc}
 \overline{D}_{\mathbb{U}}(0,2\artanh{r})=\{z\in \mathbb{U}: |z|\leqslant r\},
\end{equation}
for all $r\in(0,1)$.

\begin{lemma}\label{Lemma1}
Let $b\in(0,+\infty)$ and $r\in(0,1)$. If $z\in\overline{D}_{\mathbb{K}}(b,2\artanh{r})$ then
\begin{equation*}
    \re{z}\in\left[\frac{1-r}{1+r}b,\frac{1+r}{1-r}b\right].
\end{equation*}
\end{lemma}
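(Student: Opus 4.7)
The plan is to reduce the statement to the unit disc via the canonical conformal isomorphism $\psi(z)=\frac{z-1}{z+1}$ from $\mathbb{K}$ onto $\mathbb{U}$, which is a hyperbolic isometry by the equality case of the Schwarz--Pick lemma. First I would eliminate the parameter $b$: the dilation $z\mapsto z/b$ is a conformal self-map of $\mathbb{K}$, and a direct check shows $\rho_{\mathbb{K}}(z/b)\cdot(1/b)=\rho_{\mathbb{K}}(z)$, so it is a hyperbolic isometry that scales real parts by $1/b$. Hence it suffices to prove the case $b=1$, that is, to show that $z\in\overline{D}_{\mathbb{K}}(1,2\artanh r)$ implies $\re z\in[(1-r)/(1+r),(1+r)/(1-r)]$.

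Next I would apply $\psi$. Since $\psi(1)=0$ and $\psi$ is an isometry, $z\in\overline{D}_{\mathbb{K}}(1,2\artanh r)$ is equivalent to $\zeta:=\psi(z)\in\overline{D}_{\mathbb{U}}(0,2\artanh r)$, and by \eqref{HyperbolicDiscIsEuclideanDisc} the latter is just $\{|\zeta|\leqslant r\}$. Writing $z=\psi^{-1}(\zeta)=\frac{1+\zeta}{1-\zeta}$ and multiplying numerator and denominator of $z$ by $\overline{1-\zeta}$, a short computation gives
\begin{equation*}
 \re z=\frac{1-|\zeta|^2}{|1-\zeta|^2}.
\end{equation*}

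The remaining task is an elementary extremal problem: maximize and minimize $(1-|\zeta|^2)/|1-\zeta|^2$ over $|\zeta|\leqslant r$. Setting $\zeta=se^{i\theta}$ with $0\leqslant s\leqslant r$, the denominator equals $1-2s\cos\theta+s^2$, so for fixed $s$ the expression is largest at $\theta=0$ and smallest at $\theta=\pi$, yielding $(1+s)/(1-s)$ and $(1-s)/(1+s)$ respectively. Since $s\mapsto(1+s)/(1-s)$ is increasing and $s\mapsto(1-s)/(1+s)$ is decreasing on $[0,r]$, the extrema over the whole disc are attained at $\zeta=\pm r$, giving exactly the endpoints $(1+r)/(1-r)$ and $(1-r)/(1+r)$. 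Rescaling by $b$ yields the lemma. No step is hard; the only thing to watch is the isometry property of the dilation and of $\psi$, both of which follow immediately from the explicit formula for $\rho_{\mathbb{K}}$.
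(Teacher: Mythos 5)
Your proof is correct. It follows the same overall strategy as the paper --- transport the hyperbolic disc to $\overline{D}_{\mathbb{U}}(0,2\artanh{r})=\{|\zeta|\leqslant r\}$ by a conformal isometry and then compute real parts --- but the two arguments finish differently. The paper keeps $b$ general, conjugates by $\varphi_{a}$ with $a=\psi(b)$ to obtain a map $\kappa_b$ with $\kappa_b(0)=b$, and invokes the fact that the M\"obius image of $\{|\zeta|\leqslant r\}$ is a Euclidean disc with explicitly computed centre $\frac{1+r^2}{1-r^2}b$ and radius $\frac{2r}{1-r^2}b$, reading off the real-part interval as $[c-R,c+R]$. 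You instead normalize to $b=1$ via the dilation $z\mapsto z/b$ (correctly checked to be a hyperbolic isometry of $\mathbb{K}$ that scales real parts by $1/b$), which makes $\psi(b)=0$ and removes the need for the $\varphi_a$ conjugation; you then derive the identity $\re{\psi^{-1}(\zeta)}=(1-|\zeta|^2)/|1-\zeta|^2$ and solve an elementary extremal problem over $|\zeta|\leqslant r$, with the extrema at $\zeta=\pm r$. Your route is a bit more elementary and self-contained, since it avoids the ``image of a disc is a disc with such-and-such centre and radius'' computation that the paper leaves as a straightforward calculation; the paper's route yields slightly more information, identifying the full image set as a Euclidean disc rather than only its projection onto the real axis. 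Both arguments are complete.
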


\begin{proof}
Recall that the mapping $\psi$ defined by $\displaystyle \psi(z)=\frac{z-1}{z+1}$ is a conformal isomorphism from $\mathbb{K}$ onto $\mathbb{U}$. Set $\kappa=\psi^{-1}$, $a=\psi(b)$ and $\kappa_b=\kappa\circ\varphi_{a}$. It is clear that $\kappa_b$ is a conformal isomorphism from $\mathbb{U}$ onto $\mathbb{K}$ and $\kappa_b(0)=b$. Therefore, $\kappa_b$ is an isometry from the metric space $(\mathbb{U},d_\mathbb{U})$ onto the metric space $(\mathbb{K},d_\mathbb{K})$. Therefore
\begin{equation*}
 \kappa_b\Big(\overline{D}_{\mathbb{U}}(0,2\artanh{r})\Big)=\overline{D}_{\mathbb{K}}(b,2\artanh{r}).
\end{equation*}
On the other hand, taking into account \eqref{HyperbolicDiscIsEuclideanDisc} we have
\begin{equation*}
 \kappa_b\Big(\overline{D}_{\mathbb{U}}(0,2\artanh{r})\Big)=\kappa_b\Big(\{z\in \mathbb{U}: |z|\leqslant r\}\Big).
\end{equation*}
Furthermore, we obtain by straightforward calculations that
$\kappa_b\Big(\{z\in \mathbb{U}: |z|\leqslant r\}\Big)$ is a closed Euclidean disc with centre $\displaystyle c=\frac{1+r^2}{1-r^2}b$ and radius $\displaystyle R=\frac{2r}{1-r^2}b$. So, if $\displaystyle z\in\kappa_b\Big(\{z\in \mathbb{U}: |z|\leqslant r\}\Big)$ then
\begin{equation*}
 \re{z}\in[c-R,c+R]=\left[\frac{1-r}{1+r}b,\frac{1+r}{1-r}b\right].
\end{equation*}
\end{proof}

\begin{lemma}\label{Lemma2}
Let $0\leqslant c\leqslant 1$. Then, for all $z\in\mathbb{U}$,
\begin{align}
\begin{split}\label{glava2:LemmaRelatedBeardonCarneEq}
   \log(\cosh{d_{\mathbb{U}}(z,0)}+c\sinh{d_{\mathbb{U}}(z,0)}) &= \log\left(\frac{1+|z|^2+2c|z|}{1-|z|^2}\right)  \\
                                                                &=  d_{\mathbb{U}}(|z|\varphi_{c}(|z|),0).
\end{split}
\end{align}
\end{lemma}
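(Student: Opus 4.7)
The plan is to verify both equalities by direct computation, using the explicit formula $d_{\mathbb{U}}(z,0) = \log\frac{1+|z|}{1-|z|}$ stated earlier in the paper. Let me set $r = |z|$ and $t = d_{\mathbb{U}}(z,0)$, so that $e^{t} = \frac{1+r}{1-r}$.

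For the first equality, I would expand $\cosh t$ and $\sinh t$ via this identity:
\begin{equation*}
 \cosh t = \tfrac{1}{2}\left(\tfrac{1+r}{1-r} + \tfrac{1-r}{1+r}\right) = \frac{1+r^2}{1-r^2}, \qquad \sinh t = \tfrac{1}{2}\left(\tfrac{1+r}{1-r} - \tfrac{1-r}{1+r}\right) = \frac{2r}{1-r^2}.
\end{equation*}
Adding $c$ times the second to the first gives $\cosh t + c \sinh t = \frac{1+r^2+2cr}{1-r^2}$, and taking $\log$ yields the first equality.

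For the second equality, since $c \in [0,1]$ and $r = |z| \in [0,1)$, the quantity $r\varphi_{c}(r) = \frac{r(r+c)}{1+cr}$ is a nonnegative real number strictly less than $1$, so $|r\varphi_c(r)| = r\varphi_c(r)$. Then
\begin{equation*}
 1 + r\varphi_c(r) = \frac{1 + 2cr + r^2}{1+cr}, \qquad 1 - r\varphi_c(r) = \frac{1-r^2}{1+cr},
\end{equation*}
so that
\begin{equation*}
 d_{\mathbb{U}}(r\varphi_c(r),0) = \log\frac{1+r\varphi_c(r)}{1-r\varphi_c(r)} = \log\frac{1 + r^2 + 2cr}{1-r^2},
\end{equation*}
which matches the middle expression with $r = |z|$.

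There is no serious obstacle here; the lemma is a bookkeeping identity designed to package the right-hand side of the Beardon--Carne inequality \eqref{BeardonCarne:Inequality} into the form $d_{\mathbb{U}}(|z|\varphi_c(|z|),0)$ that will be convenient in the proof of Theorem \ref{MainTheorem}. The only thing to be mildly careful about is noting that $r\varphi_c(r) \in [0,1)$ so we may drop the absolute value in applying the formula for $d_{\mathbb{U}}(\,\cdot\,,0)$, which uses the hypothesis $0 \leq c \leq 1$.
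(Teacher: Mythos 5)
Your proof is correct and follows essentially the same route as the paper: the first equality via the standard formulas $\cosh d_{\mathbb{U}}(z,0)=\frac{1+|z|^2}{1-|z|^2}$, $\sinh d_{\mathbb{U}}(z,0)=\frac{2|z|}{1-|z|^2}$, and the second by identifying the point $|z|\varphi_c(|z|)\in[0,1)$ whose hyperbolic distance to $0$ equals the middle expression (the paper solves for this point, you verify it directly, which is the same computation). No issues.
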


\begin{proof}
Let $z\in\mathbb{U}$ be arbitrary. The first equality in (\ref{glava2:LemmaRelatedBeardonCarneEq}) holds due to the fact that
\begin{equation*}
 \cosh{d_{\mathbb{U}}(z,0)}=\frac{1+|z|^2}{1-|z|^2} \quad\mbox{ i }\quad \sinh{d_{\mathbb{U}}(z,0)}=\frac{2|z|}{1-|z|^2}.
\end{equation*}
To prove the second equality in (\ref{glava2:LemmaRelatedBeardonCarneEq}), we have to determine a point $R(z)\in[0,1)$ such that
\begin{equation}\label{glava2:LemmaRelatedBeardonCarne:dok1}
 d_{\mathbb{U}}(R(z),0)=\log\left(\frac{1+|z|^2+2c|z|}{1-|z|^2}\right).
\end{equation}
Note that the equality (\ref{glava2:LemmaRelatedBeardonCarne:dok1}) is equivalent to the equality
\begin{equation*}
 \frac{1+R(z)}{1-R(z)}=\frac{1+|z|^2+2c|z|}{1-|z|^2},
\end{equation*}
and from this we get $\displaystyle R(z)=|z|\frac{c+|z|}{1+c|z|}=|z|\varphi_{c}(|z|)$.
\end{proof}

\begin{proof}[Proof of the Theorem \ref{MainTheorem}]
By using the half-plane method, we obtain that there is a unique holomorphic function $f:\mathbb{U}\rightarrow\mathbb{K}$ such that
$\re{f}=u$, $f(0)=u(0)$, $|f'(0)|=|\nabla u(0)|$ and
\begin{equation*}
 |f^h(0)|=\frac{\rho_{\mathbb{K}}(f(0))}{\rho_{\mathbb{U}}(0)}|f'(0)|=\frac{|\nabla u(0)|}{2u(0)}.
\end{equation*}
Define $\displaystyle c=\frac{|\nabla u(0)|}{2u(0)}$. According to Theorem \ref{Thm:BeardonCarne}, if $\Omega_1=\mathbb{U}$ and $\Omega_2=\mathbb{K}$ we get
\begin{equation}\label{Proof:fla2}
 d_{\mathbb{K}}(f(z),f(0))\leqslant\log\Big(\cosh{d_{\mathbb{U}}(z,0)}+c\sinh{d_{\mathbb{U}}(z,0)}\Big)
\end{equation}
for all $z\in \mathbb{U}$. By Lemma \ref{Lemma2} we thus have
\begin{equation*}
 d_{\mathbb{K}}(f(z),f(0))\leqslant 2\artanh{\Big(|z|\varphi_{c}(|z|)\Big)},
\end{equation*}
i.e
\begin{equation*}
 f(z)\in \overline{D}_{\mathbb{K}}\Big(f(0),2\artanh{\Big(|z|\varphi_{c}(|z|)\Big)}\Big).
\end{equation*}
Using Lemma \ref{Lemma1} (note that $f(0)=u(0)$ and $u=\re{f}$) we now obtain
\begin{equation}\label{Proof:fla3}
 u(z)\in\left[\frac{1-|z|\varphi_c(|z|)}{1+|z|\varphi_c(|z|)}u(0),\frac{1+|z|\varphi_c(|z|)}{1-|z|\varphi_c(|z|)}u(0)\right]
\end{equation}
But since $\displaystyle \frac{1+|z|\varphi_c(|z|)}{1-|z|\varphi_c(|z|)}=\frac{1+|z|^2+2c|z|}{1-|z|^2}$, it follows that \eqref{Proof:fla3} is equivalent to \eqref{HarnackIneqTheoremIneq}.
\end{proof}

\end{document}